\newtheorem{thm}{Theorem}[section]
\newtheorem{lemma}[thm]{Lemma}
\newtheorem{rmk}[thm]{Remark}
\newtheorem{prop}[thm]{Proposition}
\newtheorem{cor}[thm]{Corollary}
\newtheorem{defn}[thm]{Definition}
\newtheorem{question}[thm]{Question}
\newcommand{\im}{\mbox{\rm Im}}
\newcommand{\Tor}{\mbox{\rm Tor}}
\newcommand{\depth}{\mbox{\rm depth}}
\newcommand{\hgt}{\mbox{\rm ht}}
\newcommand{\be}{\mbox{{\bf e}}}
\begin{document}

\title[On Golod Subdeterminantal Ideals]{On Golod Subdeterminantal Ideals}

\author{Omkar Javadekar}
\address{Chennai Mathematical Institute, Siruseri, Tamilnadu 603103. India}
\email{omkarj@cmi.ac.in}

\subjclass{{{13D02,13C40}}}

\keywords{Golod ring, Koszul homology, Determinantal ideal, Binomial edge ideal}

\begin{abstract}
Let $X=(x_{ij})_{m\times n}$ be a matrix of indeterminates and let  
$S=\mathbb{k}[x_{ij} \mid 1\le i\le m,\ 1\le j\le n]$ be a polynomial ring over an infinite field $\mathbb{k}$.  
Let $I$ be an ideal generated by a subset of the set of all $2\times2$ minors of $X$.  
We show that the quotient ring $S/I$ is Golod if and only if $I=I_2(Y)$ for some $2\times \ell$ or $\ell\times2$ submatrix $Y$ of $X$.  
In fact, we prove that Golodness of $S/I$ is equivalent to the triviality of the product on the Koszul homology of $S/I$ and to $I$ having a linear resolution.
Along the way, we also prove a result on the non-Golodness of tensor products of rings under certain conditions.
\end{abstract}

\maketitle

\section{Introduction}

Let $S=\mathbb{k}[x_1, \ldots, x_n]$ be a standard graded polynomial ring over a field $\mathbb{k}$, $I\subseteq (x_1, \ldots, x_n)^2$ a homogeneous ideal, and $R=S/I$. The power series $\mathcal{P}_{\mathbb{k}}^R(z)= \sum_{i \geq 0} \dim_{\mathbb{k}} \left(\Tor_i^R(\mathbb{k},\mathbb{k})\,\right)z^i$ is called the \textit{Poincar\'e series of $\mathbb{k}$ over $R$}. Due to a result of Serre \cite{Se65}, there is a term-by-term inequality
\begin{align*}
    \mathcal{P}_{\mathbb{k}}^R(z) \preceq \dfrac{(1+z)^n}{1-\sum\limits_{i \geq 1}\dim_{\mathbb{k}}\left(\Tor_i^S(R,\mathbb{k})\right)\, z^{i+1}}. 
\end{align*}
In \cite{Go62}, Golod proved that that the equality holds if and only if all Massey operations on the homology of the Koszul complex $\mathcal K(x_1, \ldots, x_n; R)$ are trivial; in this case, the ring $R$ is called \textit{Golod}. 

Detecting Golodness is a rather hard problem. For instance, we do not have a complete characterization of Golodness of $S/I$, even when $I$ is a monomial ideal. Attempts have been made to characterize Golodness of monomial quotients, especially those arising from combinatorial objects, such as Stanley--Reisner rings of simplicial complexes. Golod ideals over polynomial rings in at most 4 variables have also been characterized. For a non-exhaustive list of works on detecting Golodness and related criteria, we refer the interested reader to 
\cite{Ah19,ANFY17,BJ07,DS22,DS16,Fr18,GTSW16,HH13,HRW99,IK18,IK23,IK24,Ka16,Ka17,FW14,Va22,Zo24}
 and the references therein.

When $R$ is Golod, the triviality of all Massey operations forces the Koszul homology
$\mathcal H(\mathcal K(x_1, \ldots, x_n; R))$ to have trivial product. However, in general, 
this condition is not sufficient for Golodness, even for monomial quotients, as the examples due to De Stefani \cite{DS16} and Katth\"an \cite{Ka17} show. 
In some special cases, the triviality of the product on Koszul homology is indeed equivalent to Golodness. 
For instance, if $I$ is a monomial ideal generated in degree two, then combining the results proved in \cite{BJ07,HerzogHibiZheng, HRW99}, 
it follows that $S/I$ is Golod if and only if $I$ has a linear resolution if and only if the product on the Koszul homology is trivial. 
The main result of our paper is in the same spirit. 

We consider ideals generated by subsets of the set of all $2 \times 2$ minors of a matrix $X=(x_{ij})_{m \times n}$ of indeterminates, which we call \textit{$2$-subdeterminantal ideals of $X$} (see Definition \ref{def:subdet-ideal}). 
In the literature, these ideals have also been referred to as \textit{generalized binomial edge ideals} or \textit{binomial edge ideals of a pair of graphs} (see \cite{EHHQ14}). 
Due to this identification, these ideals are of interest in combinatorial commutative algebra, and several homological properties of the associated quotient rings, such as the Castelnuovo--Mumford regularity, Koszulness, existence of quadratic Gr\"obner bases, etc.~have been studied in recent years (see, e.g.,~\cite{BEI18, EHHQ14, JK25, LMMP26}). 
The class of ideals we consider also includes the classical determinantal ideals as a special case (see \cite{BV88}). 
Our aim in this article is to provide a complete characterization of the Golodness of these ideals. 
Our main theorem is as follows.

\begin{thm}\label{intro-main-thm}
    Let $\mathbb{k}$ be an infinite field and $X=(x_{ij})_{m \times n}$ be a matrix of indeterminates. Suppose that $S$ denotes the polynomial ring $\mathbb{k}[x_{ij} \mid 1 \leq i \leq m, 1\leq j \leq n]$, and $I$ is a $2$-subdeterminantal ideal of $X$. Then the following are equivalent:
    \begin{enumerate}
        \item[{\rm (i)}] The ring $S/I$ is Golod.
        \item[{\rm (ii)}] $I$ has a linear resolution over $S$.
        \item[{\rm (iii)}] $I=I_2(Y)$ for some $2 \times \ell$ or $\ell \times 2$ submatrix $Y$ of $X$.
        \item[{\rm (iv)}] The product on the Koszul homology $\mathcal H(\mathcal K (x_{11}, \ldots,x_{mn}; S/I))$ is trivial.
    \end{enumerate} 
\end{thm}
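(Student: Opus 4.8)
The plan is to prove the cycle $(\mathrm{iii})\Rightarrow(\mathrm{ii})\Rightarrow(\mathrm{i})\Rightarrow(\mathrm{iv})\Rightarrow(\mathrm{iii})$. The arrow $(\mathrm{iii})\Rightarrow(\mathrm{ii})$ is classical: if $I=I_2(Y)$ for a $2\times\ell$ submatrix $Y$, then $I$ is the ideal of maximal minors of a $2\times\ell$ matrix of indeterminates and the Eagon--Northcott complex is a minimal $2$-linear resolution (the $\ell\times 2$ case follows by transposition). The arrow $(\mathrm{i})\Rightarrow(\mathrm{iv})$ needs no argument, since Golodness forces all Massey operations---hence in particular the product---on $\mathcal H(\mathcal K(x_{11},\dots,x_{mn};S/I))$ to vanish. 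For $(\mathrm{ii})\Rightarrow(\mathrm{i})$ I would argue by degrees. Giving each exterior generator of $\mathcal K=\mathcal K(x_{11},\dots,x_{mn};S/I)$ internal degree $1$ makes the Koszul differential preserve the internal $\mathbb Z$-grading, so that $H_p(\mathcal K)\cong \Tor_p^S(S/I,\mathbb k)$ is, for a $2$-linear resolution, concentrated in internal degree $p+1$ for $p\ge 1$. A $k$-fold Massey operation of classes in homological degrees $p_1,\dots,p_k$ lands in homological degree $(\sum_i p_i)+(k-2)$; since products of cycles add internal degrees, it has internal degree $\sum_i(p_i+1)=(\sum_i p_i)+k$, while homology in the target homological degree lives in internal degree $(\sum_i p_i)+k-1$. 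This discrepancy of $1$ forces every Massey operation to vanish, so $S/I$ is Golod. (With generation degree $d\ge 2$ the same count gives discrepancy $(d-2)(k-1)+1\ge 1$, so this step is really the general fact that an ideal with a linear resolution generated in degree at least two is Golod.)

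The substance of the theorem is $(\mathrm{iv})\Rightarrow(\mathrm{iii})$, which I would establish contrapositively: if $I$ has neither stated shape, I produce a nonzero product on $\mathcal H(\mathcal K)$. Let $A\subseteq[m]$ and $B\subseteq[n]$ be the rows and columns appearing among the generators of $I$. If $|A|=2$, every generator is a minor on this fixed row pair, so $I=J_G$ is the binomial edge ideal of the graph $G$ on $B$ whose edges are the occurring column pairs, and $I$ has shape $(\mathrm{iii})$ exactly when $G$ is complete; the case $|B|=2$ is symmetric, and $|A|=|B|=2$ gives a single minor. Hence it suffices to treat (a) the binomial edge case $|A|=2$ with $G$ not complete, and (b) the two-dimensional case $|A|\ge 3$ and $|B|\ge 3$, in which neither shape is possible.

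In case (a), if $G$ is disconnected with two edge-bearing components, the associated minors involve disjoint variable sets, so $S/I$ is a tensor product over $\mathbb k$ of two non-regular graded algebras; this I would exclude with a separate lemma on the non-Golodness of such tensor products (the result announced in the abstract), proved by exhibiting, via the K\"unneth formula, classes pulled back from the two factors whose product does not vanish. If $G$ is connected but not complete it has an induced path $i-k-j$ with $\{i,j\}\notin G$; letting $f,g$ be the minors of the edges $\{i,k\},\{k,j\}$, the key observation is that the Koszul cycle $z_f z_g$ is a boundary precisely when the connecting minor on columns $\{i,j\}$ lies in $I$, that is, precisely when $\{i,j\}\in G$. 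Its absence leaves $[z_f]\cdot[z_g]$ nonzero, which I would certify by working in the $\mathbb Z^{m+n}$-grading---under which $I$ is homogeneous, each term of a minor carrying row-degree $\mathbf e_a+\mathbf e_b$ and column-degree $\mathbf e_c+\mathbf e_d$---and checking that neither $\Tor_3^S(S/I,\mathbb k)$ nor the image of the Koszul differential meets the multidegree of $z_f z_g$.

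Case (b) reduces to the same two mechanisms. Unless all minors pairwise share an entry, two of them involve disjoint variables, and their product is again detected as a nonvanishing extremal class (reflecting that disjoint minors form a regular sequence); the pairwise-intersecting configurations are confined to a small window around a single entry and are settled by the ``missing connecting minor'' principle of case (a), since such a configuration can never be completed to shape $(\mathrm{iii})$ without adjoining a minor it lacks. The main obstacle, and the technical core, is exactly this non-vanishing certification for connected, non-special configurations: once $I$ has many generators there are many potential boundaries that could cancel the candidate class, and excluding them demands a careful description of $\Tor_2^S(S/I,\mathbb k)$ in the relevant multidegrees, alongside the proof of the tensor-product non-Golodness lemma in the generality needed here.
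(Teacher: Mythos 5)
Your routine implications are fine: (iii)$\Rightarrow$(ii) via Eagon--Northcott and (i)$\Rightarrow$(iv) are exactly the paper's steps, and your internal-degree count for (ii)$\Rightarrow$(i) is a correct, self-contained substitute for the paper's citation of \cite[Theorem 4]{HRW99}. The gaps are in (iv)$\Rightarrow$(iii), where both of your ``mechanisms'' fail. The first, that two generators $f,g$ of $I$ with disjoint variable sets have $[z_f]\cdot[z_g]\neq 0$ (``reflecting that disjoint minors form a regular sequence''), is false as a general principle: in $I=I_2(Y)$ for a $2\times 4$ matrix $Y$, the minors on columns $\{1,2\}$ and on columns $\{3,4\}$ are variable-disjoint and form a regular sequence, yet $S/I$ has shape (iii), hence is Golod, hence \emph{all} Koszul homology products vanish. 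Whether such a product survives depends on the rest of the ideal, not on the pair $f,g$ alone; the valid statement (the paper's Proposition 3.3/Corollary 3.4, proved by K\"unneth) requires $I$ \emph{itself} to split as a sum of two ideals in disjoint sets of variables. In your case (b) you have only two disjoint generators, not a splitting of $I$, so this mechanism certifies nothing.

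The second mechanism also breaks: pairwise-intersecting configurations in case (b) are \emph{not} ``confined to a small window around a single entry'' (pairwise intersection does not force a common entry), and the missing-connecting-minor principle has nothing to bite on for them. Concretely, all nine $2$-minors of a $3\times 3$ matrix pairwise share a variable with no common entry, and $I=I_2(X)$ for $X$ of size $3\times 3$ contains \emph{every} connecting minor yet is not of shape (iii); the same holds for the five-generator ideal $(M_1,M_2,M_3,M_4,M_7)$ arising in the paper's $3\times 3$ analysis. These are exactly the Gorenstein cases, and the paper disposes of them with a tool entirely absent from your proposal: the Avramov--Golod theorem \cite{AG71} that the Koszul homology algebra of a Gorenstein ring is a Poincar\'e duality algebra, together with the observation (Remark 2.6) that its socle degree is at least $2$, which forces a nonzero product. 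The same duality (applied to a codimension-two complete intersection) is also what certifies nonvanishing in your induced-path case, where you instead propose an uncompleted multidegree computation of $\Tor_2$ and the Koszul boundaries --- you acknowledge this core is not carried out. Finally, your reduction structure is too coarse: the paper's proof works because the grading $\deg x_{ij}=\epsilon_j$ lets one restrict the Koszul complex to degrees $\le r(\epsilon_i+\epsilon_j+\epsilon_k)$, killing all generators outside a three-column window, and then runs an induction through the $2\times n$, $3\times 3$, and $3\times n$ cases followed by a maximal-submatrix argument; without that restriction trick and the Gorenstein duality input, the ``many potential boundaries'' problem you flag cannot be resolved.
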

In particular, Theorem \ref{intro-main-thm} says that the binomial edge ideal of a pair of graphs $(G_1,G_2)$ is Golod if and only if both $G_1$ and $G_2$ are complete and one of them is equal to $K_2$.

Our proof of Theorem \ref{intro-main-thm} proceeds in several steps. We begin with the case where $X$ is a $2 \times n$ matrix, and then analyze the $3 \times 3$ and $3 \times n$ cases. These preliminary cases provide the foundation for the main argument. Along the way, we prove a general result on the non-Golodness of certain tensor products, which allows us to reduce the problem to a simpler setting. This result is in the same spirit as the classical statement that a Golod complete intersection must be a hypersurface. A key ingredient in the proof of our main theorem is the fact that for a Gorenstein local ring, the homology algebra of its Koszul complex is a Poincar\'e algebra, as shown by Avramov and Golod \cite{AG71}. Assuming that $\mathbb{k}$ is infinite, the same result holds for standard graded Gorenstein $\mathbb{k}$-algebras, which is the version we shall use.

The article is organized as follows. In Section~2, we recall the necessary definitions and set up the notation. We also collect relevant known results and extract some immediate consequences. 
Section~3 studies the Golod property of certain tensor products and proves a more general result for $2$-subdeterminantal ideals, which will be used in subsequent reductions. 
In Section~4, we prove several preparatory results and establish the main theorem. We conclude with remarks and questions aimed at possible generalizations of our results, including higher-order minors, other classes of matrices, and related types of ideals.

\section*{Acknowledgments}
The author thanks Aryaman Maithani for his help with some  \texttt{Macaulay2}~\cite{M2} computations. The author is also grateful to Mugdha Pokharanakar and Abhiram Subramanian for their comments that improved the exposition. 
The author acknowledges support from a Postdoctoral Fellowship at the Chennai Mathematical Institute and additional support from the Infosys Foundation.

\section{Preliminaries}\label{sec:prelim}

Throughout this article, $\mathbb{k}$ denotes an infinite field. 
Let $S$ be a standard graded polynomial ring over $\mathbb{k}$, $I$ a homogeneous ideal of $S$, and $R=S/I$. 
Let $f_1,\ldots,f_d$ be a sequence of homogeneous elements in $R$. The \emph{Koszul complex} $\mathcal{K}(f_1,\ldots,f_d;R)$ is the graded complex of free $R$-modules
\[
\mathcal{K}(f_1,\ldots,f_d;R):\quad 0\to \wedge^d R^d \xrightarrow{\partial_d} \wedge^{d-1} R^d \xrightarrow{\partial_{d-1}} \cdots \xrightarrow{\partial_2} R^d \xrightarrow{\partial_1} R \to 0,
\]
where, $\partial_1(\be_i)=f_i$, and  for $i_1<\cdots<i_r$, the differential is given by
\[
\partial_r(\be_{i_1}\wedge\cdots\wedge \be_{i_r}) = \sum_{s=1}^r (-1)^{s-1} f_{i_s} \be_{i_1}\wedge\cdots\widehat{\be_{i_s}}\cdots\wedge \be_{i_r}.
\]
We may sometimes abuse the notation and omit $\wedge$. The \emph{$i^{th}$ Koszul homology} is then defined as $$\mathcal{H}_i(\mathcal{K}(f_1,\ldots,f_d;R)) := \ker(\partial_i)/\im(\partial_{i+1}).$$ It is well-known that $\mathcal H(\mathcal K(f_1,\ldots,f_d;R))$ inherits the product structure coming from the exterior algebra structure on the Koszul complex. 
\begin{defn}
We say that the Koszul homology $\mathcal H(\mathcal K(f_1,\ldots,f_d;R))$ has trivial product if
\[
\mathcal H_{\geq 1}(\mathcal K(f_1,\ldots,f_d;R)) \cdot 
\mathcal H_{\geq 1}(\mathcal K(f_1,\ldots,f_d;R)) = 0.
\]
If this product is nonzero, we say that the Koszul homology has a nontrivial product.
\end{defn}

\begin{defn}
    Given a finitely generated graded $R$-module $M$, its $i^{th}$ Betti number $\beta_i^R(M)$, and the $(i,j)^{th}$ graded Betti number $\beta_{i,j}^R(M)$ are defined as 
$$\beta_{i}^R(M) = \dim_{\mathbb{k}}\left(\Tor ^R_i(M, \mathbb{k})\right) {{\quad \quad\text{and} }}\quad\quad\beta_{i,j}^R(M) = \dim_{\mathbb{k}}\left(\Tor ^R_i(M, \mathbb{k})\right)_j.$$
The series $\mathcal P_M^R(z)= \sum_{i \geq 0} \beta_i^R(M)z^i$ is called the \emph{Poincar\'e series of $M$ over $R$.}
If $M$ is generated in degree $d$, then we say that \emph{$M$ has a linear resolution over $R$} if $\beta_{i,j}^R(M)\neq 0$ implies $j = d+i$.
\end{defn}

Given a homogeneous ideal $I$ in a polynomial ring $S$, we let $\mu(I)$ denote the cardinality of a minimal generating set of $I$. In other words, $\mu(I)=\beta_0^S(I)=\beta_1^S(S/I)$.
\begin{rmk}\label{rmk:Golod-def} {\rm In this remark, we recall some known facts about Golodness which we shall use later. 
    Let $S=\mathbb{k}[x_1,\ldots, x_n]$, $I$ be a homogeneous ideal of $S$, and $R=S/I$.
\begin{enumerate}[label={\rm (\alph*)}]
    \item  Since $\beta_i^S(R)=\dim_{\mathbb{k}}\left(\Tor ^S_i(R, \mathbb{k})\right)$ and the Koszul complex $\mathcal K(x_1, \ldots, x_n; S)$ gives a minimal free resolution of $\mathbb{k}$ over $S$, the term-by-term inequality due to Serre mentioned in Section~1 the can be rewritten as 
   $$\mathcal P^R_{\mathbb{k}}(z) \preceq \dfrac{(1+z)^n}{1-z(\mathcal P^S_R(z)-1)}.$$
   The ring $R$ is Golod if and only if the above inequality becomes an equality.
    \item If $R$ is Golod, then the Koszul homology $\mathcal H(\mathcal K(x_1, \ldots, x_n; R))$ has trivial product (see \cite[Remark 5.2.1]{Av98}).
    
    \item If $x$ is a linear nonzerodivisor on $R$, then $R$ is Golod if and only if $R/( x )$ is Golod (see \cite[Proposition 5.2.4]{Av98}).
    \end{enumerate}
}
\end{rmk}

Let $X=(x_{ij})_{m \times n}$ be a matrix of indererminates. From now on, we use $\mathbb{k}[X]$ to denote the polynomial ring $\mathbb{k}[x_{ij} \mid 1 \leq i \leq m, 1 \leq j \leq n]$. 

\begin{defn}\label{def:subdet-ideal}
    Given any $t\geq 1$, an ideal $I$ of $\mathbb{k}[X]$ is called a \emph{$t$-subdeterminantal ideal of $X$}, if $I$ is generated by a subset of the set of all $t \times t$ minors of $X$. 
    Also, we use $I_t(X)$ to denote the ideal generated by all $t \times t$ minors of $X$. 
\end{defn}

\begin{rmk}\label{rmk:det-ideal-height}{\rm Let $X=(x_{ij})_{m \times n}$, $S=\mathbb{k}[X]$, and $I$ a $t$-subdeterminantal ideal of $X$. If $\mu(I)\geq 2$, then $\hgt(I)\geq 2$. This gives $\depth(S/I)\leq \dim(S/I)\leq mn-2$. Hence, by the Auslander--Buchsbaum formula (see \cite[Theorem 1.3.3]{BH93}), we get that $\beta_i^S(S/I)\neq 0$ for $0 \leq i \leq 2$. But since $\beta_i^S(S/I)$ is precisely the $\mathbb{k}$-vector space dimension of $\mathcal H_i(\mathcal{K}(x_{11}, \ldots, x_{mn}; S/I))$, we obtain $\mathcal H_1(\mathcal{K}(x_{11}, \ldots, x_{mn}; S/I)) \neq 0$ and $\mathcal H_2(\mathcal{K}(x_{11}, \ldots, x_{mn}; S/I)) \neq 0$. 

    This fact will be used crucially in several results in Section~4. 
}\end{rmk}

\begin{rmk}\label{rmk:grading}
     {\rm Suppose $X=(x_{ij})_{m \times n}$, $t \geq 2$, and let $S=\mathbb{k}[X]$ be a standard graded polynomial ring. Let $I$ be a $t$-subdeterminantal ideal of $X$. 
     \\
We may also look at $S$ as a graded ring, where the grading is defined by setting $\deg(x_{ij})=\epsilon_j$. Then, with respect to this grading, the ideal $I$ is a graded ideal of $S$. Thus, the quotient ring also acquires this grading, and so does the Koszul complex $\mathcal K(x_{11}, \ldots, x_{mn}; S/I)$, with degree zero differential.\\
Note that there is a natural partial order $\leq$ on the degrees. We say that $\sum \alpha_i\epsilon_i \leq \sum \gamma_i\epsilon_i$ if and only if $\alpha_i\leq \gamma_i$ for all $i$.
We shall make use of these facts in the proof of our main theorem. 
}\end{rmk}

\section{Golodness of Tensor Products}

We begin with a characterization of the Golodness of tensor product of certain standard graded $\mathbb{k}$-algebras. The next result may be viewed as an analogue of the classical statement that a Golod complete intersection is necessarily a hypersurface.

\begin{prop}\label{prop:tensor}
    Let \(I_1 \subseteq ( x_1,\ldots, x_n)^2 \subseteq \mathbb{k}[x_1,\ldots, x_n]\) and 
    \(I_2 \subseteq ( y_1,\ldots, y_m)^2 \subseteq \mathbb{k}[y_1,\ldots,y_m]\) be homogeneous ideals 
    with \(I_1 \neq 0\). Set
    \[
    R_1 = \frac{\mathbb{k}[x_1,\ldots, x_n]}{I_1}, \qquad
    R_2 = \frac{\mathbb{k}[y_1,\ldots, y_m]}{I_2}, \qquad
    R = R_1 \otimes_{\mathbb{k}} R_2.
    \]
    Then \(R\) is Golod if and only if \(R_1\) is Golod and \(I_2 = 0\).
\end{prop}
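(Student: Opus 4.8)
The plan is to compare three formal power series with nonnegative coefficients: the actual Poincar\'e series $\mathcal{P}_\mathbb{k}^R(z)$, the product of the individual Serre bounds for $R_1$ and $R_2$, and the Serre bound for $R$ itself. Golodness of $R$ forces all three to coincide, and the coincidence of the last two is exactly what pins down $I_2=0$. First I would record two multiplicativity statements coming from the K\"unneth formula over a field. Writing $S_1=\mathbb{k}[x_1,\ldots,x_n]$, $S_2=\mathbb{k}[y_1,\ldots,y_m]$ and $S=S_1\otimes_\mathbb{k}S_2$, the isomorphisms $\Tor^R(\mathbb{k},\mathbb{k})\cong\Tor^{R_1}(\mathbb{k},\mathbb{k})\otimes_\mathbb{k}\Tor^{R_2}(\mathbb{k},\mathbb{k})$ and $\Tor^S(R,\mathbb{k})\cong\Tor^{S_1}(R_1,\mathbb{k})\otimes_\mathbb{k}\Tor^{S_2}(R_2,\mathbb{k})$ yield
\[
\mathcal{P}_\mathbb{k}^R=\mathcal{P}_\mathbb{k}^{R_1}\,\mathcal{P}_\mathbb{k}^{R_2}\qquad\text{and}\qquad \mathcal{P}_R^S=P_1P_2,
\]
where I abbreviate $P_i:=\mathcal{P}_{R_i}^{S_i}(z)$, each a polynomial with $P_i-1\succeq 0$ and divisible by $z$ (since $I_i\subseteq(\ldots)^2$).

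The easy direction $(\Leftarrow)$ is immediate from Remark~\ref{rmk:Golod-def}(c): if $I_2=0$ then $R=R_1[y_1,\ldots,y_m]$, and the $y_j$ form a sequence of linear nonzerodivisors, so iterating the statement that killing a linear nonzerodivisor preserves Golodness shows $R$ is Golod precisely when $R_1$ is.

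For the hard direction $(\Rightarrow)$ the core is a comparison of the denominators in the two Serre bounds. Set $D_G=1-z(P_1P_2-1)$, the denominator of the Golod bound for $R$ in $N=n+m$ variables, and $D_P=(1-z(P_1-1))(1-z(P_2-1))$, the product of the denominators for $R_1$ and $R_2$. A direct expansion gives the key identity
\[
D_P-D_G=z(1+z)(P_1-1)(P_2-1).
\]
Since $P_i-1\succeq 0$, the right-hand side has nonnegative coefficients; and because $1/D_G$ and $1/D_P$ both expand as geometric-type series with nonnegative coefficients, it follows that $1/D_G-1/D_P=(D_P-D_G)/(D_GD_P)\succeq 0$. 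Multiplying the two individual Serre inequalities $\mathcal{P}_\mathbb{k}^{R_i}\preceq (1+z)^{n_i}/(1-z(P_i-1))$ (legitimate since all four series have nonnegative coefficients) then produces the chain
\[
\mathcal{P}_\mathbb{k}^R=\mathcal{P}_\mathbb{k}^{R_1}\mathcal{P}_\mathbb{k}^{R_2}\ \preceq\ \frac{(1+z)^{N}}{D_P}\ \preceq\ \frac{(1+z)^{N}}{D_G}.
\]
If $R$ is Golod, the leftmost and rightmost series are equal, so both $\preceq$ must be equalities; the second forces $D_P=D_G$, i.e.\ $(P_1-1)(P_2-1)=0$. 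As $I_1\neq 0$ gives $P_1\neq 1$, we conclude $P_2=1$, which means $R_2$ is $S_2$-free, i.e.\ $I_2=0$. Finally, with $I_2=0$ in hand, $R=R_1[y_1,\ldots,y_m]$, and Remark~\ref{rmk:Golod-def}(c) again gives that $R_1$ is Golod.

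I expect the main obstacle to be the sign bookkeeping that makes the sandwich argument rigorous: one must verify that $P_i-1$, $1/D_G$, and $1/D_P$ all have nonnegative coefficients, so that both the term-by-term multiplication of the two Serre bounds and the coefficientwise comparison $1/D_G\succeq 1/D_P$ are valid. The algebraic identity $D_P-D_G=z(1+z)(P_1-1)(P_2-1)$ is the crux of the whole argument, since it is precisely what converts the hypothesis ``$R$ is Golod'' into the vanishing of $(P_1-1)(P_2-1)$, and hence into $I_2=0$.
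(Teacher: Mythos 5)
Your proof is correct, and its overall skeleton matches the paper's: the easy direction via Remark~\ref{rmk:Golod-def}(c), and the hard direction via the multiplicativity $\mathcal{P}^{R}_{\mathbb{k}}=\mathcal{P}^{R_1}_{\mathbb{k}}\mathcal{P}^{R_2}_{\mathbb{k}}$ and $\mathcal{P}^{S}_{R}=P_1P_2$ (the paper cites \cite[2.3]{Jo09} for exactly these), followed by multiplying the two Serre bounds and comparing with the Golod equality for $R$. Where you genuinely differ is the finishing move. The paper compares the coefficient of $z^3$ on the two sides of its inequality (4) relating $1/D_G$ and $1/D_P$: the left side carries the extra term $\mu(I_1)\mu(I_2)$, which forces $\mu(I_2)=0$. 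You instead prove the exact identity $D_P-D_G=z(1+z)(P_1-1)(P_2-1)$, deduce the coefficientwise inequality $1/D_G\succeq 1/D_P$, and run a sandwich argument: Golodness makes the outer terms of your chain equal, hence $D_P=D_G$, hence $(P_1-1)(P_2-1)=0$ in the integral domain of power series, hence $P_2=1$ and $I_2=0$. Each route buys something. Yours is more careful at one delicate point: the paper obtains (4) by cancelling $(1+z)^{n+m}$ from a term-by-term inequality, and such cancellation is not a valid operation for $\preceq$ in general (e.g.\ $(1+z)\cdot\tfrac{1}{1+z}=1\succeq 0$ while $\tfrac{1}{1+z}\not\succeq 0$); in the paper this is harmless because both series agree in degrees $\le 2$, so the $z^3$-coefficient comparison can be made before cancelling, but your two-sided argument needs no such repair, and it yields the stronger conclusion $D_P=D_G$, i.e.\ $P_2=1$, in one stroke. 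What the paper's version buys is precisely the isolation of the lowest-order obstruction: the quantity $\mu(I_1)\mu(I_2)$ it extracts at $z^3$ is identified in the remark following Proposition~\ref{prop:disjoint-ideals-imply-nontrivial-product} with $\dim_{\mathbb{k}}(\mathcal H_1(\mathcal K_1))\cdot\dim_{\mathbb{k}}(\mathcal H_1(\mathcal K_2))$, the ``first obstruction'' to Golodness, which is the observation the paper then builds on via the K\"unneth argument. Your identity, by contrast, packages all the obstructions at once but does not exhibit their Koszul-homology meaning.
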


   \begin{proof}
        Let \(S_1 = \mathbb{k}[x_1,\ldots, x_n]\), \(S_2 = \mathbb{k}[y_1,\ldots, y_m]\), and 
    \(S = S_1 \otimes_{\mathbb{k}} S_2 \cong \mathbb{k}[x_1,\ldots, x_n, y_1,\ldots, y_m]\). 
    Then, with \(I = I_1 S + I_2 S\), we have \(R \cong S/I\).

    $(\Longleftarrow)$ Suppose \(R_1\) is Golod and \(I_2 = 0\). Then \(R \cong R_1[y_1,\ldots,y_m]\), 
    and \(y_1,\ldots,y_m\) is a regular sequence on \(R\). By Remark~\ref{rmk:Golod-def}(c), 
    \(R\) is Golod.

    $(\Longrightarrow)$ Assume that \(R\) is Golod. We first show that \(I_2 = 0\). 
    The Golod property of \(R\) gives the following equality for the Poincar\'e series:
    \[
    \mathcal{P}^R_{\mathbb{k}}(z) = \frac{(1+z)^{n+m}}{1 - z\bigl(\mathcal{P}^S_R(z) - 1\bigr)},
    \tag{1}
    \]
    by Remark~\ref{rmk:Golod-def}(a). On the other hand, from \cite[2.3]{Jo09} we have
    \[
    \mathcal{P}^R_{\mathbb{k}}(z) = \mathcal{P}^{R_1}_{\mathbb{k}}(z) \mathcal{P}^{R_2}_{\mathbb{k}}(z),
    \qquad
    \mathcal{P}^S_R(z) = \mathcal{P}^{S_1}_{R_1}(z) \mathcal{P}^{S_2}_{R_2}(z). \tag{2}
    \]
   Again using Remark \ref{rmk:Golod-def}(a), we have the term-by-term inequalities
    \[
    \mathcal{P}^{R_1}_{\mathbb{k}}(z) \preceq 
    \frac{(1+z)^n}{1 - z\bigl(\mathcal{P}^{S_1}_{R_1}(z) - 1\bigr)}, \qquad
    \mathcal{P}^{R_2}_{\mathbb{k}}(z) \preceq 
    \frac{(1+z)^m}{1 - z\bigl(\mathcal{P}^{S_2}_{R_2}(z) - 1\bigr)}.
    \]
    Multiplying these inequalities and using (2), we obtain
    \[
    \mathcal{P}^R_{\mathbb{k}}(z) \preceq 
    \frac{(1+z)^{n+m}}{\bigl(1 - z(\mathcal{P}^{S_1}_{R_1}(z) - 1)\bigr)
                      \bigl(1 - z(\mathcal{P}^{S_2}_{R_2}(z) - 1)\bigr)}. \tag{3}
    \]

    Combining (1) and (3), we deduce
    \[
    \frac{1}{1 - z(\mathcal{P}^S_R(z) - 1)} \preceq
    \frac{1}{\bigl(1 - z(\mathcal{P}^{S_1}_{R_1}(z) - 1)\bigr)
            \bigl(1 - z(\mathcal{P}^{S_2}_{R_2}(z) - 1)\bigr)}. \tag{4}
    \]
    Expanding both sides of (4) as power series in \(z\), we see that the coefficient of \(z^3\) on the left-hand side equals
    \[
    \beta_1^{S_1}(I_1) + \beta_1^{S_2}(I_2) + \mu(I_1)\mu(I_2),
    \]
    while on the right-hand side, it equals
    \[
    \beta_1^{S_1}(I_1) + \beta_1^{S_2}(I_2).
    \]
    Since \(\mu(I_1) > 0\) by hypothesis, inequality (4) forces \(\mu(I_2) = 0\); hence \(I_2 = 0\). 
    Thus, we have \(R=R_1 \otimes_{\mathbb{k}} S_2\). 
    The images of \(y_1,\ldots,y_m\) in \(R\) form a regular sequence, and by 
    Remark~\ref{rmk:Golod-def}(c), the quotient \(R/(y_1,\ldots,y_m) \cong R_1\) is Golod. This completes the proof. 
   \end{proof}

In fact, a stronger statement than the one in Proposition \ref{prop:tensor} holds. Let the setup be as in Proposition~\ref{prop:tensor}. Consider the Koszul complexes $\mathcal K_1\coloneqq \mathcal K(x_1, \ldots, x_n; S_1/I_1)$, $\mathcal K_2\coloneqq \mathcal K(y_1, \ldots, y_m; S_2/I_2)$, and $$\mathcal K\coloneqq \mathcal K_1 \otimes_{\mathbb{k}} K_2= \mathcal K(x_1, \ldots, x_n, y_1, \ldots, y_m; S/I).$$ Applying the K\"unneth theorem (see \cite[Chapter IV]{CE56}), for every integer $k$, we have an isomorphism 

$$ \bigoplus_{i+j=k} \mathcal H_i(\mathcal K_1) \otimes_{\mathbb{k}} \mathcal H_j(\mathcal K_2) \xrightarrow[]{\alpha} \mathcal H_k(\mathcal K),$$
where the map $\alpha$ respects the product structure on the Koszul homology. To be more precise, if $a \in \mathcal H_i(\mathcal K_1)$ and $b \in \mathcal H_j(\mathcal K_2)$, then naturally $a \in \mathcal H_i(\mathcal K)$ and $b \in \mathcal H_j(\mathcal K)$, and we have $\alpha(a \otimes b)= a\wedge b$ in $\mathcal H_{i+j}(\mathcal K) $.  
As noted in Proposition \ref{prop:tensor}, we have the equality $\mathcal{P}^S_R(z) = \mathcal{P}^{S_1}_{R_1}(z) \mathcal{P}^{S_2}_{R_2}(z)$ of Poincar\'e series, which tells us that $$\dim_{\mathbb{k}}(\mathcal H_k(\mathcal K)) =\sum_{i+j=k} \dim_{\mathbb{k}}(\mathcal H_i(\mathcal K_1)) \cdot \dim_{\mathbb{k}}(\mathcal H_j(\mathcal K_2)).$$
Therefore, it follows that if $\{a_{i1}, \ldots, a_{ir_i}\}$ is a $\mathbb{k}$-basis of $\mathcal H_i(\mathcal K_1)$ and $\{b_{j1}, \ldots, b_{js_j}\}$ is one for $\mathcal H_j(\mathcal K_2)$, then a $\mathbb{k}$-basis of $\mathcal H_k(\mathcal K)$ is given by 
$\bigcup\limits_{i+j=k}\{ a_{ip}\wedge b_{jq} \mid 1\leq p \leq r_i, 1 \leq q \leq s_j \}$. We have thus proved the following.
\begin{prop}\label{prop:disjoint-ideals-imply-nontrivial-product}
    Suppose that $I_1\subseteq S_1=\mathbb{k}[x_1, \ldots, x_n]$ and $I_2\subseteq S_2=\mathbb{k} [y_1, \ldots, y_m]$ are nonzero homogeneous ideals generated in degree $\geq 2$, $S=S_1 \otimes_{\mathbb{k}}S_2$, and $I=I_1S+I_2S$. Then the product on the Koszul homology $\mathcal H(\mathcal K(x_{1}, \ldots, x_{n}, y_1, \ldots, y_m; S/I))$ is nontrivial.
\end{prop}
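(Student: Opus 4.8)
The plan is to exploit the K\"unneth decomposition of the Koszul homology of the tensor product, exactly as set up in the discussion preceding the statement. The key observation is that the hypotheses on $I_1$ and $I_2$ are strong enough to guarantee the existence of nonzero homology classes in degree one on \emph{both} factors, and that the product of two such classes lands in the degree-two part of $\mathcal H(\mathcal K)$ via the map $\alpha$, where it must remain nonzero because $\alpha$ is an isomorphism respecting the wedge product.

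First I would isolate the source of nonvanishing degree-one homology. Since $I_1$ is a nonzero homogeneous ideal generated in degree $\geq 2$, we have $I_1\subseteq (x_1,\ldots,x_n)^2$ with $\mu(I_1)\geq 1$, so $\beta_1^{S_1}(S_1/I_1)=\mu(I_1)>0$; because $\beta_1^{S_1}(S_1/I_1)=\dim_{\mathbb{k}}\mathcal H_1(\mathcal K_1)$, this yields a nonzero class $a\in\mathcal H_1(\mathcal K_1)$. By the identical argument applied to $I_2$ (using $\mu(I_2)>0$), we obtain a nonzero class $b\in\mathcal H_1(\mathcal K_2)$. This is the analogue of Remark~\ref{rmk:det-ideal-height} but in the simpler setting where only $\mathcal H_1\neq 0$ is needed on each side, and it is where the assumptions ``$I_i$ nonzero, generated in degree $\geq 2$'' get used.

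Next I would push these classes into $\mathcal H(\mathcal K)$ and multiply. Under the inclusions of the factors, $a\in\mathcal H_1(\mathcal K)$ and $b\in\mathcal H_1(\mathcal K)$, and the product $a\wedge b$ lives in $\mathcal H_2(\mathcal K)$. By the compatibility of $\alpha$ with the product structure, $a\wedge b=\alpha(a\otimes b)$. Since $a$ and $b$ are each part of a $\mathbb{k}$-basis of $\mathcal H_1(\mathcal K_1)$ and $\mathcal H_1(\mathcal K_2)$ respectively, the element $a\otimes b$ is a nonzero element of $\mathcal H_1(\mathcal K_1)\otimes_{\mathbb{k}}\mathcal H_1(\mathcal K_2)$, which is one of the summands in the K\"unneth decomposition of $\mathcal H_2(\mathcal K)$. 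As $\alpha$ is an isomorphism, $\alpha(a\otimes b)\neq 0$, so $a\wedge b\neq 0$. Both $a$ and $b$ lie in $\mathcal H_{\geq 1}(\mathcal K)$, so this exhibits a nonzero product of positive-degree homology classes, which is precisely the assertion that the product is nontrivial.

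The argument is essentially immediate once the K\"unneth isomorphism $\alpha$ is known to respect the wedge product, so there is no serious obstacle; the only point requiring a modicum of care is the claim that $a\otimes b$ is genuinely nonzero in the tensor product and that it survives under $\alpha$. This is where I would be explicit that $a\neq 0$ and $b\neq 0$ in vector spaces over a field forces $a\otimes_{\mathbb{k}} b\neq 0$, and that injectivity of $\alpha$ then transports this nonvanishing to $\mathcal H_2(\mathcal K)$. No further delicacy about Massey products or Golodness enters, since we are proving only the statement about the product structure, not about Golodness itself.
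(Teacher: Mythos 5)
Your proof is correct and takes essentially the same route as the paper: the K\"unneth isomorphism $\alpha$ for $\mathcal K=\mathcal K_1\otimes_{\mathbb{k}}\mathcal K_2$, its compatibility with the wedge product, and the nonvanishing of $\mathcal H_1$ on each factor (from $\mu(I_i)>0$) combine to give $a\wedge b=\alpha(a\otimes b)\neq 0$. The only cosmetic difference is that the paper also records, via the Poincar\'e series factorization $\mathcal P^S_R(z)=\mathcal P^{S_1}_{R_1}(z)\mathcal P^{S_2}_{R_2}(z)$, that products of basis elements form a full basis of $\mathcal H_k(\mathcal K)$, whereas you need only the injectivity of $\alpha$ on the single nonzero element $a\otimes b$ --- a harmless streamlining.
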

\begin{rmk}{\rm 
    While Proposition \ref{prop:disjoint-ideals-imply-nontrivial-product}, together with the fact that the Koszul homology of Golod rings have trivial product, can be readily applied to prove the \textit{non-trivial} implication in the statement of Proposition~\ref{prop:tensor}, we would like to remark that the proof of Proposition \ref{prop:tensor} is still instructive. The extra factor $\mu(I_1)\mu(I_2)$ showing up in the proof is precisely equal to $\dim_{\mathbb{k}}(\mathcal H_1(\mathcal K_1))\cdot \dim_{\mathbb{k}}(\mathcal H_1(\mathcal K_2))$, which can be thought of as the \textit{first obstruction} to Golodness of $S/I$, as expected. 
}\end{rmk}

Proposition \ref{prop:disjoint-ideals-imply-nontrivial-product} readily implies statements such as if $I \subseteq \mathbb{k}[x_1, \ldots, x_n]$ is a monomial ideal that can be written as a sum of two monomial ideals whose generators do not share a common variable (e.g., if $I$ is a complete intersection of codimention at least two), then $S/I$ does not have a trivial product on its Koszul homology, and hence is not Golod.

The version of the Proposition \ref{prop:disjoint-ideals-imply-nontrivial-product} which will be using crucially in next section is as follows.
\begin{cor}\label{cor:nontrivial-product-if-det-and-disjoint}
    Let $I$ be $2$-subdeterminantal ideal of $X=(x_{ij})_{m \times n}$ and $S=\mathbb{k}[X]$. If $I$ can be written as a sum of two $2$-subdeterminantal ideals of $X$ whose generating minors do not share a common variable, then the product on the Koszul homology $\mathcal H(\mathcal K(x_{11}, \ldots, x_{mn}; S/I))$ is nontrivial.
\end{cor}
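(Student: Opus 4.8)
The plan is to reduce Corollary~\ref{cor:nontrivial-product-if-det-and-disjoint} directly to Proposition~\ref{prop:disjoint-ideals-imply-nontrivial-product}, whose hypotheses are tailored precisely for this situation. By assumption we may write $I = J_1 + J_2$, where $J_1$ and $J_2$ are $2$-subdeterminantal ideals of $X$ whose generating minors share no common variable $x_{ij}$. First I would partition the variables: let $V_1$ be the set of indeterminates $x_{ij}$ appearing in the generators of $J_1$, and $V_2$ the analogous set for $J_2$. The disjointness hypothesis says exactly that $V_1 \cap V_2 = \emptyset$. The remaining variables of $X$ (those appearing in neither $J_1$ nor $J_2$) can be assigned arbitrarily, say to $V_1$, since they play no role in either ideal.

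Next I would set $S_1 = \mathbb{k}[V_1]$ and $S_2 = \mathbb{k}[V_2]$, so that $S = \mathbb{k}[X] = S_1 \otimes_{\mathbb{k}} S_2$ as standard graded $\mathbb{k}$-algebras, simply because $\{V_1, V_2\}$ partitions the variable set of $S$. Since every generator of $J_1$ is a $2\times 2$ minor involving only variables from $V_1$, we have $J_1 = I_1 S$ for a homogeneous ideal $I_1 \subseteq S_1$ generated by those same minors, and likewise $J_2 = I_2 S$ for a homogeneous ideal $I_2 \subseteq S_2$. A $2\times 2$ minor is a homogeneous polynomial of degree $2$, so both $I_1$ and $I_2$ are generated in degree $2 \geq 2$. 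Moreover both $I_1$ and $I_2$ are nonzero: this is the one point requiring a word of justification, and it follows because the phrase ``$I$ \emph{can be written} as a sum of two $2$-subdeterminantal ideals'' is understood to mean a genuine decomposition into two \emph{nonzero} summands (otherwise $I$ would itself be a single $2$-subdeterminantal ideal with no disjointness content). With this in hand, the triple $(I_1, I_2, S)$ satisfies all the hypotheses of Proposition~\ref{prop:disjoint-ideals-imply-nontrivial-product}, and $I = I_1 S + I_2 S$ matches its conclusion's ideal.

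Applying Proposition~\ref{prop:disjoint-ideals-imply-nontrivial-product} then yields that the product on $\mathcal H(\mathcal K(x_{11}, \ldots, x_{mn}; S/I))$ is nontrivial, which is exactly the assertion of the corollary. I do not expect any genuine obstacle here: the entire content is a bookkeeping translation of ``generating minors do not share a common variable'' into the tensor-factorization language of the proposition. The only subtlety worth flagging explicitly is the variable-partition step, namely verifying that the disjointness of the \emph{minors} forces a clean partition of the \emph{variables} into the two polynomial subrings, together with correctly disposing of any ``spectator'' variables of $X$ that appear in neither summand; once the partition is fixed, the identification $S \cong S_1 \otimes_{\mathbb{k}} S_2$ and the passage from $J_i$ to $I_i$ are purely formal.
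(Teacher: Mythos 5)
Your proposal is correct and is essentially the paper's own argument: the paper presents this corollary as an immediate specialization of Proposition~\ref{prop:disjoint-ideals-imply-nontrivial-product}, obtained by exactly the variable-partition and tensor-factorization bookkeeping you describe (including absorbing spectator variables into one factor). Your explicit remark that both summands must be nonzero is the right reading of the hypothesis, since otherwise the statement would fail, e.g.\ for $I=I_2(Y)$ with $Y$ a $2\times\ell$ submatrix.
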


\section{Golod 2-Subdeterminantal Ideals}

This section is devoted to the proof of Theorem~\ref{intro-main-thm}. We begin with several preliminary lemmas.

\begin{lemma}\label{lem:2byn}
    Let $X = \begin{bmatrix}
    x_1 & x_2 & \cdots & x_n \\
    y_1 & y_2 & \cdots & y_n
\end{bmatrix}$ be a $2 \times n$ matrix of indeterminates, $S=\mathbb{k}[X]$, and $I$ be a nonzero $2$-subdeterminantal ideal of $X$. Then the following are equivalent:
\begin{enumerate}
    \item[{\rm (i)}] The product on $\mathcal H(\mathcal K(x_1, \ldots,x_n, y_1, \ldots, y_n; S/I))$ is trivial.
    \item[{\rm (ii)}] $I=I_2(Y)$ for some $2 \times \ell$ submatrix $Y$ of $X$.
\end{enumerate}
\end{lemma}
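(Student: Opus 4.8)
The plan is to translate the statement into graph-theoretic language and then treat the two implications separately. Writing $\Delta_{ij} = x_iy_j - x_jy_i$ for the $2\times 2$ minor on columns $i<j$, a nonzero $2$-subdeterminantal ideal $I$ of $X$ is the binomial edge ideal of the graph $G$ on $[n]$ whose edges are the index pairs of the generating minors. With this dictionary, condition (ii) says exactly that $G$ is a complete graph on its set of non-isolated vertices: indeed $I_2(Y)$ for $Y$ the submatrix on a column set $C$ is generated by all $\Delta_{ab}$ with $a,b\in C$, i.e.\ by the edges of the complete graph on $C$. I will use throughout the grading of Remark~\ref{rmk:grading}, in which $x_i,y_i$ and the Koszul generators $e_{x_i},e_{y_i}$ all have multidegree $\epsilon_i$, so that $\Delta_{ij}$ is multihomogeneous of degree $\epsilon_i+\epsilon_j$ and the Koszul complex and its homology are multigraded.

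For (ii) $\Rightarrow$ (i), I would first record the general principle that an ideal generated in degree $2$ with a linear resolution has trivial Koszul homology product: if $S/I$ has a linear resolution then $\mathcal H_i \cong \Tor_i^S(S/I,\mathbb{k})$ is concentrated in internal degree $i+1$, while the product $\mathcal H_p\cdot\mathcal H_q \subseteq \mathcal H_{p+q}$ preserves internal degree and so lands in degree $(p+1)+(q+1)=p+q+2$, which strictly exceeds the degree $p+q+1$ carried by $\mathcal H_{p+q}$; hence the product vanishes for $p,q\ge 1$. It therefore suffices to observe that $I=I_2(Y)$ has a linear resolution over $S$. This is classical: the Eagon--Northcott complex resolves $S/I_2(Y)$ minimally for the generic $2\times\ell$ matrix $Y$, and the columns of $X$ outside $Y$ contribute only a polynomial extension, so the resolution over $S$ is obtained by base change and remains linear (see \cite{BV88}).

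For (i) $\Rightarrow$ (ii) I would argue the contrapositive: if $G$ is not complete on its support, the product is nontrivial. If the support graph is disconnected, its edge set splits into two nonempty parts lying on disjoint vertex sets, so $I$ is a sum of two $2$-subdeterminantal ideals whose generating minors share no variable, and Corollary~\ref{cor:nontrivial-product-if-det-and-disjoint} gives a nontrivial product. If the support graph is connected but not complete, it contains an induced path $P_3$, say on vertices $i,j,k$ with edges $ij,jk$ and non-edge $ik$ (a graph with no induced $P_3$ is a disjoint union of cliques, so connected and $P_3$-free forces complete). Consider the $1$-cycles $\alpha = x_ie_{y_j} - x_je_{y_i}$ and $\beta = x_je_{y_k} - x_ke_{y_j}$ representing the classes of $\Delta_{ij}$ and $\Delta_{jk}$; their product $\alpha\wedge\beta$ is multihomogeneous of degree $\mathbf d := \epsilon_i + 2\epsilon_j + \epsilon_k$. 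The key step is a multigraded localization: in every multidegree supported on $\{i,j,k\}$ the graded piece of $I$ agrees with that of $I' := (\Delta_{ij},\Delta_{jk})$, because any generator $\Delta_{ab}$ contributing there must satisfy $\{a,b\}\subseteq\{i,j,k\}$, and $ij,jk$ are the only edges of $G$ inside $\{i,j,k\}$. Consequently the multidegree-$\mathbf d$ strand of $\mathcal K(x_1,\ldots,y_n; S/I)$ coincides, as a complex, with that of the Koszul complex on the six variables $x_i,x_j,x_k,y_i,y_j,y_k$ over $S'' := \mathbb{k}[x_i,x_j,x_k,y_i,y_j,y_k]/I'$, and likewise for homology. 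Now $\Delta_{ij},\Delta_{jk}$ form a regular sequence in the six-variable ring (each is irreducible and $\Delta_{jk}\notin(\Delta_{ij})$), so $S''$ is a complete intersection and its Koszul homology is the exterior algebra on $\mathcal H_1$; hence $[\alpha]\wedge[\beta]$ generates $\wedge^2\mathcal H_1 \cong \mathcal H_2 \neq 0$ in multidegree $\mathbf d$. Transporting back, $[\alpha\wedge\beta]\neq 0$ in $\mathcal H_2(\mathcal K(x_1,\ldots,y_n; S/I))$, so the product is nontrivial.

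I expect the crux to be the connected non-complete case, and specifically the multigraded localization to the induced $P_3$: this is precisely what guarantees that the product $[\alpha]\wedge[\beta]$ coming from the two adjacent edges is not annihilated by syzygies introduced by the remaining minors of $G$. Once the computation is confined to the six relevant variables, the complete-intersection structure makes nonvanishing automatic, while the remaining cases are dispatched by the internal-degree bookkeeping of (ii) $\Rightarrow$ (i) and by Corollary~\ref{cor:nontrivial-product-if-det-and-disjoint}.
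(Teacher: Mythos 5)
Your proposal is correct, and its skeleton coincides with the paper's proof: the easy direction rests on the linearity of the Eagon--Northcott resolution, and the hard direction reduces, via the multigrading of Remark~\ref{rmk:grading}, to the three columns of an induced path, where the quotient ring becomes a codimension-two complete intersection. (Your restriction to multidegrees supported on $\{i,j,k\}$ is exactly the paper's restriction to degrees $\leq r\epsilon_i+r\epsilon_j+r\epsilon_k$ for $r\gg 0$, and your disconnected/induced-$P_3$ case split is the paper's appeal to Corollary~\ref{cor:nontrivial-product-if-det-and-disjoint} together with the existence of a triple carrying exactly two generating minors.) The differences lie in the two finishing ingredients, and both of your substitutions are sound. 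For (ii)~$\Rightarrow$~(i), the paper quotes \cite[Theorem 4]{HRW99} to get Golodness and then uses that Golod rings have trivial Koszul homology product (Remark~\ref{rmk:Golod-def}(b)), whereas your internal-degree bookkeeping deduces triviality of the product directly from the linear resolution; this is more elementary and bypasses Golod theory entirely. For the nonvanishing over the three-column ring, the paper invokes Avramov--Golod \cite{AG71}: the ring is Gorenstein, so its Koszul homology is a Poincar\'e algebra of socle degree at least $2$, forcing a nontrivial product; you instead use that the Koszul homology algebra of a complete intersection is the exterior algebra on $\mathcal H_1$, which exhibits the explicit nonzero class $[\alpha]\wedge[\beta]$. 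Your version is more concrete and fully sufficient for this lemma; the one thing the paper's Poincar\'e-duality formulation buys is reusability, since the same argument is applied later (Lemma~\ref{lem:3by3}) to Gorenstein rings that are not complete intersections (e.g.\ $\mu(I)=5$ or $9$), where an exterior-algebra argument would not apply.
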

\begin{proof}
      (ii) $\Longrightarrow$ (i).\  When $I=I_2(Y)$ for some $2 \times \ell$ matrix $Y$, the Eagon--Northcott complex (see \cite{EN62}) gives a linear minimal free resolution of $I$ over $S$. From \cite[Theorem 4]{HRW99}, we know that if $I$ is componentwise linear and contains no linear forms, then $S/I$ is Golod.  Thus, by Remark \ref{rmk:Golod-def}(b), we get that the product on $\mathcal H(\mathcal K(x_1, \ldots,x_n, y_1, \ldots, y_n; S/I))$ is trivial.

       (i) $\Longrightarrow$ (ii).\  Suppose that the product on $\mathcal H(\mathcal K(x_1, \ldots,x_n, y_1, \ldots, y_n; S/I))$ is trivial. For the sake of contradiction, assume that $I$ is not of the form $I_2(Y)$. Then in view of Corollary \ref{cor:nontrivial-product-if-det-and-disjoint}, we get that there exist $1 \leq i<j<k\leq n$ such that exactly two out of the following three minors are minimal generators of $I$:
    $$ M_1=\begin{vmatrix}
x_i & x_j \\
y_i & y_j
\end{vmatrix}, \quad  M_2=\begin{vmatrix}
x_j & x_k \\
y_j & y_k
\end{vmatrix}, \quad  M_3=\begin{vmatrix}
x_i & x_k \\
y_i & y_k
\end{vmatrix}.
$$
Without loss of generality, suppose that $M_1$ and $M_2$ are minimal generators of $I$, and $M_3$ is not. 

 Recall that the Koszul complex $\mathcal K(x_1, \ldots, x_n, y_1, \ldots, y_n; S/I)$ is a graded complex with degree zero maps and the grading defined as in Remark \ref{rmk:grading}. Restrict the Koszul complex to the degrees $\leq r\epsilon_i+r\epsilon_j+r\epsilon_k$ for $r\gg 0$. Then the resulting complex is precisely the Koszul complex on the elements $x_i, x_j, x_j, y_i, y_j, y_k$ over $R'=\mathbb{k}[x_i, x_j, x_k, y_i, y_j, y_k]/(M_1, M_2 )$. 

 Note that $\mathbb{k}[x_i, x_j, x_k, y_i, y_j, y_k]/( M_1, M_2 )$ is a complete intersection, and hence Gorenstein. From the work of Avramov and Golod \cite{AG71}, it follows that the Koszul homology algebra of $S/I$ is a Poincar\'e algebra. Since $\mathcal H(\mathcal K(x_i, x_j, x_k, y_i, y_j, y_k; R'))$ has socle degree at least $2$ by Remark~\ref{rmk:det-ideal-height}, it has nontrivial product. Hence, $\mathcal H(\mathcal K(x_1, \ldots,x_n, y_1, \ldots, y_n; S/I))$ has nontrivial product, which is a contradiction. Thus, $I$ must be of the form $I_2(Y)$ for some $2 \times \ell$ submatrix $Y$ of $X$.
\end{proof}

The following is a crucial observation, which follows immediately from the proof of (i) $\Longrightarrow$ (ii) in the lemma above.

\begin{cor}\label{cor:two-rows}
    Let $X=(x_{ij})_{m \times n}$ be a matrix of indeterminates, $S=\mathbb{k}[X]$, and let $I$ be a $2$-subdeterminantal ideal of $X$. Given any $1\leq j_1< j_2\leq n$, let $I^{j_1, j_2}$ be the ideal generated by the generators of $I$ of the form $$\begin{vmatrix}
x_{i_1j_1} & x_{i_1j_2} \\
x_{i_2j_1} & x_{i_2j_2}
\end{vmatrix}.$$
If $\mathcal{H}(\mathcal K(x_{11},\ldots, x_{mn}; S/I))$ has trivial product, then $I^{j_1, j_2} = I_2(Y)$ for some $\ell  \times 2$ submatrix $Y$ of $$\begin{bmatrix}
    x_{1j_1} & \cdots & x_{mj_1} \\
    x_{1j_2} & \cdots & x_{mj_2}
\end{bmatrix}^T.$$ 

Replacing $X$ by $X^T$, it is immediate that an analogous statement holds for a choice of any two rows instead of columns.
\end{cor}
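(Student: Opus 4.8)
The plan is to prove \Cref{cor:two-rows} by reducing it to \Cref{lem:2byn}, which already handles the $2\times n$ case. The key observation is that the ideal $I^{j_1,j_2}$ only involves the indeterminates appearing in columns $j_1$ and $j_2$ of $X$, and each of its generating minors is a $2\times 2$ minor built from two rows $i_1, i_2$ of the $2\times m$ matrix whose rows are columns $j_1$ and $j_2$ of $X$. In other words, writing $Z = \begin{bmatrix} x_{1j_1} & \cdots & x_{mj_1} \\ x_{1j_2} & \cdots & x_{mj_2} \end{bmatrix}^T$, the ideal $I^{j_1,j_2}$ is precisely a $2$-subdeterminantal ideal of the $m\times 2$ matrix $Z$ (equivalently, of $2\times m$ after transposing). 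So the statement I want is exactly the conclusion of \Cref{lem:2byn} applied to $Z$, once I know that $\mathcal{H}(\mathcal{K}(\cdots; \mathbb{k}[Z]/I^{j_1,j_2}))$ has trivial product.

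First I would make precise the passage from the trivial-product hypothesis on $\mathcal{H}(\mathcal{K}(x_{11},\ldots,x_{mn}; S/I))$ to a trivial-product statement for $I^{j_1,j_2}$ over the smaller polynomial ring in the variables of columns $j_1,j_2$. The mechanism is exactly the degree-restriction argument already used in the proof of (i)$\Rightarrow$(ii) of \Cref{lem:2byn}: using the fine grading of \Cref{rmk:grading} (with $\deg x_{ij}=\epsilon_j$), I restrict the Koszul complex $\mathcal{K}(x_{11},\ldots,x_{mn}; S/I)$ to multidegrees $\leq r\epsilon_{j_1}+r\epsilon_{j_2}$ for $r\gg 0$. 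The surviving piece is the Koszul complex on the variables in columns $j_1,j_2$ over the quotient $\mathbb{k}[Z]/I^{j_1,j_2}$, because any generator of $I$ involving a column other than $j_1$ or $j_2$ contributes a multidegree with a positive $\epsilon_\ell$-component for some $\ell\neq j_1,j_2$, hence lies outside the restricted range. Since the product on Koszul homology respects this fine grading, triviality of the product on the full Koszul homology passes to triviality of the product on $\mathcal{H}(\mathcal{K}(\cdots;\mathbb{k}[Z]/I^{j_1,j_2}))$.

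With that in hand, I would invoke \Cref{lem:2byn}: condition (i) holds for the $2$-subdeterminantal ideal $I^{j_1,j_2}$ of the matrix $Z^T$ (a $2\times m$ matrix of indeterminates), so condition (ii) gives $I^{j_1,j_2}=I_2(Y)$ for some $2\times \ell$ submatrix $Y$ of $Z^T$, equivalently an $\ell\times 2$ submatrix of $Z$, which is exactly the desired form. The final sentence about rows versus columns is handled by applying the column version to $X^T$ and observing that $2\times 2$ minors and the trivial-product condition are invariant under transposition.

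The main obstacle — though it is more a matter of care than of genuine difficulty — is justifying that the degree restriction produces exactly the Koszul complex over $\mathbb{k}[Z]/I^{j_1,j_2}$ and that the product structure is compatible with taking this graded strand. This is precisely the content the author flags by saying the corollary ``follows immediately from the proof'' of the lemma: the degree-truncation identification of a sub/quotient complex of the Koszul complex, and the fact that the exterior-algebra product is homogeneous for the $\epsilon_j$-grading, are already established there, so I would cite that argument rather than redo it. The only additional bookkeeping is checking that no generator of $I$ outside $I^{j_1,j_2}$ survives the truncation, which is immediate from the multidegrees of $2\times 2$ minors.
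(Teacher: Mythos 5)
Your proposal is correct and follows essentially the same route the paper intends: the paper's one-line justification (``follows immediately from the proof of (i) $\Longrightarrow$ (ii)'' of Lemma~\ref{lem:2byn}) is precisely the multigraded truncation you spell out---restrict to multidegrees supported on $\epsilon_{j_1},\epsilon_{j_2}$, identify the resulting strand with the Koszul complex of $\mathbb{k}[Z]/I^{j_1,j_2}$, note the product is homogeneous for this grading, and invoke the lemma. The only detail you leave implicit is the degenerate case $I^{j_1,j_2}=0$ (where Lemma~\ref{lem:2byn} does not apply but the conclusion holds vacuously with a $1\times 2$ submatrix $Y$), a corner case the paper glosses over as well.
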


We next focus on the $3 \times 3$ case. 
\begin{lemma}\label{lem:3by3}
    Let $X=(x_{ij})_{3\times 3}$ be a matrix of indeterminates, $S=\mathbb{k}[X]$, and $I$ be a nonzero $2$-subdeterminantal ideal $X$.
Then the following are equivalent:
\begin{enumerate}
  \item[{\rm (i)}] The product on $\mathcal H(\mathcal K(x_{11},\ldots,x_{33}; S/I))$ is trivial.
  \item[{\rm (ii)}] $I=I_2(Y)$ for some $2\times \ell$ or $\ell\times 2$ submatrix $Y$ of $X$, where $\ell=2$ or $3$.
\end{enumerate}
\end{lemma}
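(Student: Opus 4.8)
The plan is to prove (ii) $\Rightarrow$ (i) exactly as in Lemma~\ref{lem:2byn}, and to concentrate the real work on (i) $\Rightarrow$ (ii). For (ii) $\Rightarrow$ (i): when $I=I_2(Y)$ for a $2\times\ell$ or $\ell\times2$ submatrix $Y$, the Eagon--Northcott complex \cite{EN62} resolves $I$ linearly, so $I$ is componentwise linear containing no linear forms, whence $S/I$ is Golod by \cite[Theorem 4]{HRW99}; triviality of the product then follows from Remark~\ref{rmk:Golod-def}(b).

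For (i) $\Rightarrow$ (ii) I would first use Corollary~\ref{cor:two-rows} and its transpose to organize the combinatorics. Encode $I$ by the $3\times3$ incidence array $B$ whose rows are indexed by the three row-pairs, whose columns are indexed by the three column-pairs, and which has a $1$ in position $(R,C)$ exactly when the minor on rows $R$ and columns $C$ is a minimal generator. Applying Corollary~\ref{cor:two-rows} to each column-pair shows that the generators lying in that column-pair form $I_2(Y)$ for a sub-collection of rows; as there are only three rows, this forces every column-sum of $B$ into $\{0,1,3\}$, never $2$. The transpose statement does the same for the row-sums. A short enumeration of binary $3\times3$ arrays with all row- and column-sums in $\{0,1,3\}$ shows that, up to transposing $X$ and relabelling its rows and columns, the only nonzero possibilities are: a single $1$; one full row; one full column; two $1$'s in distinct rows and columns; a \emph{permutation} array (one $1$ in each row and column); a \emph{cross} (a full row together with a full column); and the all-ones array. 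The single $1$, the full row, and the full column are precisely the $2\times2$, $2\times3$, and $3\times2$ cases of (ii), so it remains to rule out the other four by exhibiting a nontrivial product.

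Unlike the $2\times n$ situation, no two $2\times2$ minors of a $3\times3$ matrix involve disjoint variables, so Corollary~\ref{cor:nontrivial-product-if-det-and-disjoint} is of no use here and the remaining arrays must be excluded differently. For the two-minor, permutation, and all-ones arrays I would recognize the ideal as Gorenstein and invoke Avramov--Golod \cite{AG71}. Two distinct minors are coprime irreducible quadrics, hence a regular sequence; the three minors of a permutation array also form a regular sequence, of height $3$ (after reducing to a single representative via the row/column relabelling automorphisms of $S$, this follows from an elementary fibre-dimension count over the three ``diagonal'' variables $x_{11},x_{22},x_{33}$). Thus both give complete intersections, and hence Gorenstein rings; the all-ones array gives $I_2(X)$, which is Gorenstein because $X$ is square \cite{BV88}. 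In each case the Koszul homology algebra is a Poincar\'e duality algebra whose socle degree equals the codimension $d\in\{2,3,4\}$; since $d\geq 2$ and $\mathcal H_1\neq 0$ by Remark~\ref{rmk:det-ideal-height}, the perfect pairing $\mathcal H_1\times\mathcal H_{d-1}\to\mathcal H_d$ forces $\mathcal H_1\cdot\mathcal H_{d-1}\neq 0$, contradicting (i).

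The main obstacle is the cross, $I=I_2(Y_{\mathrm{row}})+I_2(Y_{\mathrm{col}})$ with $Y_{\mathrm{row}}$ two full rows and $Y_{\mathrm{col}}$ two full columns: it has five generators but codimension at most $4$, so it is not a complete intersection, and it is not of the determinantal shape covered by the Gorenstein criterion, so the Poincar\'e-duality shortcut is unavailable. Here I would argue directly, exhibiting explicit Koszul $1$-cycles $z,z'$ representing a generator that uses the ``extra'' column and one that uses the ``extra'' row, and showing that $z\wedge z'$ is a nonzero class in $\mathcal H_2$. The natural device is the $\mathbb{Z}^3\times\mathbb{Z}^3$ grading of Remark~\ref{rmk:grading} (by rows and by columns at once): the product $z\wedge z'$ is homogeneous of a single multidegree $\delta$, and the space of $3$-chains of multidegree $\delta$ is spanned by finitely many terms $x_{ij}\,e_a\wedge e_b\wedge e_c$, so checking that $z\wedge z'$ is not a boundary modulo $I$ reduces to a finite linear-algebra computation in the single graded strand $\delta$. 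The delicate point, and the reason the cross is harder than the complete-intersection cases, is that the three generators of $I$ beyond the two producing $z,z'$ contribute additional boundaries in degree $\delta$, and one must verify that these do not annihilate the class; I expect this to be the step requiring the most care, and it is readily confirmed by a direct (or machine-assisted) computation.
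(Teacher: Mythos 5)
Your strategy for (i) $\Rightarrow$ (ii) is essentially the paper's, just organized differently: the incidence-array bookkeeping with row- and column-sums in $\{0,1,3\}$ is exactly the paper's repeated application of Corollary~\ref{cor:two-rows} and its transpose, your enumeration of admissible configurations is correct, and your disposal of the two-minor, permutation, and all-ones configurations (complete intersection, resp.\ $I_2(X)$ with $X$ square, hence Gorenstein, hence nontrivial product by \cite{AG71} together with Remark~\ref{rmk:det-ideal-height}) coincides with the paper's argument. The genuine gap is the cross, and it is a double one. First, your assertion that for the cross ``the Poincar\'e-duality shortcut is unavailable'' is false: the cross ideal $I=I_2(Y_{\mathrm{row}})+I_2(Y_{\mathrm{col}})$ \emph{is} Gorenstein, and this is precisely how the paper eliminates it. Indeed, the cross is the ladder determinantal ideal $I_2(L)$ for the ladder $L$ obtained from $X$ by deleting the corner entry $x_{33}$ (a $2\times 2$ minor avoids $x_{33}$ exactly when its row pair is $\{1,2\}$ or its column pair is $\{1,2\}$); such rings are Cohen--Macaulay domains (Narasimhan, Herzog--Trung), and here the five generators form a Gr\"obner basis with squarefree initial ideal $(x_{11}x_{22},\,x_{11}x_{23},\,x_{12}x_{23},\,x_{11}x_{32},\,x_{21}x_{32})$, giving codimension $3$ and symmetric $h$-vector $(1,3,1)$; Gorensteinness then follows from Stanley's theorem on Cohen--Macaulay graded domains with symmetric $h$-vector.

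Second, and fatally, the replacement argument you propose for the cross---producing $z,z'\in\mathcal H_1$ with $z\wedge z'\neq 0$ and confirming nonvanishing ``by a direct (or machine-assisted) computation''---is not only left undone, it cannot succeed, because for the cross one has $\mathcal H_1\cdot\mathcal H_1=0$. This is forced by degrees: being Gorenstein of codimension $3$ with five quadric generators and $h$-vector $(1,3,1)$, the cross has graded minimal free resolution $0\to S(-5)\to S(-3)^5\to S(-2)^5\to S$ (the back half is the twisted dual of the front half), so $\mathcal H_1\cong\Tor_1^S(S/I,\mathbb{k})$ is concentrated in internal degree $2$ and $\mathcal H_2$ in internal degree $3$; hence any product of two elements of $\mathcal H_1$ lands in the vanishing degree-$4$ component of $\mathcal H_2$. (This is the ``class G'' phenomenon in Avramov's classification of codepth-$3$ local rings: Gorenstein non-complete-intersections of codimension $3$ have $\mathcal H_1\cdot\mathcal H_1=0$.) The nontrivial product guaranteed by Poincar\'e duality lives in $\mathcal H_1\cdot\mathcal H_2\to\mathcal H_3$, which is exactly what the paper's Gorenstein argument detects and what your planned search among products of $1$-cycles would never see. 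So, as written, your proof of (i) $\Rightarrow$ (ii) fails at the one configuration you yourself flagged as the delicate step; the repair is to prove the cross Gorenstein and invoke the same Avramov--Golod duality used in your other cases.
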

\begin{proof}
   As noted previously, if (ii) holds, then $S/I$ is Golod, and hence (i) holds.

For the converse, suppose that  (i) holds. If $\mu(I)=1$, then $I$ is of the form $I_2(Y)$, and we are done. So, assume that $\mu(I) \geq 2$. As noted in the proof of Lemma~\ref{lem:2byn},
if $S/I$ is Gorenstein, then by \cite{AG71} and Remark~\ref{rmk:det-ideal-height}, the Koszul homology algebra of $S/I$
is a Poincar\'e algebra of socle degree at least $2$; and in this case, (i)
does not hold. Therefore, we may further assume that $S/I$ is not Gorenstein. 

If $\mu(I)\le 3$, then the only possibility for $I$ to fail to be Gorenstein (in fact, a complete intersection)
is that $I=I_2(Y)$ for some $2\times 3$ or $3\times 2$ submatrix $Y$ of $X$, which implies (ii).
Hence, we may assume that $\mu(I)\ge 4$.
 Now, consider the matrix 
\[
X=\begin{bmatrix}
x_{11} & x_{12} & x_{13} \\
x_{21} & x_{22} & x_{23} \\
x_{31} & x_{32} & x_{33}
\end{bmatrix}
\]
and let 
\[
\begin{aligned}
M_1 &= \begin{vmatrix} x_{11} & x_{12} \\ x_{21} & x_{22} \end{vmatrix}, &
M_2 &= \begin{vmatrix} x_{11} & x_{13} \\ x_{21} & x_{23} \end{vmatrix}, &
M_3 &= \begin{vmatrix} x_{12} & x_{13} \\ x_{22} & x_{23} \end{vmatrix}, \\[1em]
M_4 &= \begin{vmatrix} x_{11} & x_{12} \\ x_{31} & x_{32} \end{vmatrix}, &
M_5 &= \begin{vmatrix} x_{11} & x_{13} \\ x_{31} & x_{33} \end{vmatrix}, &
M_6 &= \begin{vmatrix} x_{12} & x_{13} \\ x_{32} & x_{33} \end{vmatrix}, \\[1em]
M_7 &= \begin{vmatrix} x_{21} & x_{22} \\ x_{31} & x_{32} \end{vmatrix}, &
M_8 &= \begin{vmatrix} x_{21} & x_{23} \\ x_{31} & x_{33} \end{vmatrix}, &
M_9 &= \begin{vmatrix} x_{22} & x_{23} \\ x_{32} & x_{33} \end{vmatrix}.
\end{aligned}
\]
If $\mu(I)=9$, then $S/I$ is Gorenstein by \cite[Corollary 8.9]{BV88}. Hence, we may assume that
$\mu(I)\le 8$. Without loss of generality, suppose that $M_9\notin I$. Then, applying
Corollary~\ref{cor:two-rows} to the submatrix of rows $2$ and $3$, we may assume that
$M_8\notin I$. Similarly, applying the corollary to the submatrix of columns $2$ and $3$,
we may assume that $M_6\notin I$.

Applying Corollary~\ref{cor:two-rows} to the submatrix consisting of rows $1$ and $3$,
we see that at most one of the elements $M_4$ and $M_5$ can be a generator of $I$. 
Similarly, applying the corollary to the submatrix of columns $1$ and $3$, at most one
of the elements $M_2$ and $M_5$ can be a generator of $I$. Thus, we have the following cases:

 $\bullet$ \textit{Case 1:}  $M_5$ is a generator of $I$.\\
    In this case, since $\mu(I)\ge 2$, we must have $I = ( M_1, M_3, M_5, M_7 )$. 
    However, applying Corollary~\ref{cor:two-rows} to the submatrix formed by rows $1$ and $2$,
    we see that condition (i) does not hold, which is impossible.
    
   $\bullet$ \textit{Case 2:} $M_5$ is not a generator of $I$.\\
    In this case, either $I = ( M_1, M_2, M_3, M_4, M_7)$, or $I$ is generated by a
    subset of $\{M_1, M_2, M_3, M_4, M_7\}$ with $4$ elements. 
    In the former case, $S/I$ is Gorenstein, a contradiction. In the latter case, applying
    Corollary~\ref{cor:two-rows} to the submatrix formed by rows $1$ and $2$ shows that $I$ contains $M_1, M_2, M_3$.
A similar argument applied to the submatrix formed by columns $1$ and $2$ implies $I$ contains $M_1, M_4, M_7$.
    Therefore, $I = ( M_1, M_2, M_3, M_4, M_7)$, which is impossible.

This shows that $I = I_2(Y)$ for some $2 \times \ell$ or $\ell \times 2$ submatrix $Y$ of $X$,
with $\ell = 2$ or $3$.
\end{proof}

\begin{cor}\label{cor:3byn}
    Let $X=(x_{ij})_{3\times n}$ be a matrix of indeterminates, $S=\mathbb{k}[X]$, and $I$ be a nonzero $2$-subdeterminantal ideal of
$X$.
Then the following are equivalent:
\begin{enumerate}
  \item[{\rm (i)}] The product on $\mathcal H(\mathcal K(x_{11},\ldots,x_{3n}; S/I))$ is trivial.
  \item[{\rm (ii)}] $I=I_2(Y)$ for some $2\times \ell$ or $\ell\times 2$ submatrix $Y$ of $X$ with $\ell \geq 2$.
\end{enumerate}
Replacing $X$ by $X^T$, we see that an analogous statemtent holds for $n \times 3$ matrix of indeterminates.
\end{cor}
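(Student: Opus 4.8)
The implication (ii) $\Rightarrow$ (i) goes exactly as in Lemmas~\ref{lem:2byn} and~\ref{lem:3by3}: if $I=I_2(Y)$ with $Y$ a $2\times\ell$ or $\ell\times 2$ submatrix and $\ell\geq 2$, then $I$ is the ideal of maximal minors of a generic matrix, so the Eagon--Northcott complex \cite{EN62} gives a linear minimal free resolution; hence $S/I$ is Golod by \cite[Theorem 4]{HRW99}, and the product is trivial by Remark~\ref{rmk:Golod-def}(b).

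For (i) $\Rightarrow$ (ii), the plan is to reduce to the $3\times 3$ case by a multigraded restriction and then to patch the local information together using the disjointness obstruction of Corollary~\ref{cor:nontrivial-product-if-det-and-disjoint}. The key tool, already implicit in the proofs of Lemma~\ref{lem:2byn} and Corollary~\ref{cor:two-rows}, is that restricting the column-multigraded Koszul complex (Remark~\ref{rmk:grading}) to the degrees supported on any three fixed columns $\{j_1,j_2,j_3\}$ yields exactly the Koszul complex of $\mathbb{k}[\,x_{ij}\mid j\in\{j_1,j_2,j_3\}\,]/I'$, where $I'$ is generated by the minors of $I$ lying in those columns. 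Since the product respects the multigrading, triviality descends to $I'$, so whenever $I'\neq 0$ Lemma~\ref{lem:3by3} applies. From this I would first extract a \emph{Claim}: any two minimal generators of $I$ using different pairs of rows must have column-pairs that are either equal or disjoint. Indeed, two such generators sharing exactly one column would span three columns, and the restriction would place two generators of distinct row-pairs and distinct column-pairs inside a single $I_2(Y')$, which Lemma~\ref{lem:3by3} forbids.

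I would then organize the argument by the three pairs of rows. By the transpose of Corollary~\ref{cor:two-rows}, for each row-pair $\{a,b\}$ the generators of $I$ using exactly rows $a,b$ form a complete determinantal ideal $J^{ab}=I_2(Y_{ab})$ on a column set $C_{ab}$, and $I=J^{12}+J^{13}+J^{23}$. Splitting by the number of nonempty $C_{ab}$: if exactly one is nonempty, then $I=I_2(Y)$ for a $2\times\ell$ block and we are done. If exactly two are nonempty, the Claim together with Corollary~\ref{cor:nontrivial-product-if-det-and-disjoint} forces each column set to have size $2$ (a larger $C_{12}$ would make $J^{12}$ variable-disjoint from the other block, contradicting triviality), and then either the two single minors are variable-disjoint (again a contradiction) or they lie in a common column-pair $\{c,d\}$, in which case $I^{c,d}$ consists of exactly two of the three possible minors, contradicting Corollary~\ref{cor:two-rows}. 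Hence either one or all three of the $C_{ab}$ are nonempty. In the three-nonempty case the same size bound reduces us to three single minors whose column-pairs are pairwise equal or disjoint; Corollary~\ref{cor:nontrivial-product-if-det-and-disjoint} rules out any disjoint splitting, forcing all three to coincide, so that $I=I_2(Y)$ with $Y$ the $3\times 2$ block on those columns.

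The main obstacle is the bookkeeping in the two- and three-active cases: Lemma~\ref{lem:3by3} only controls three columns at a time, so configurations spread over four or more columns must be excluded by repeatedly combining the Claim with the global disjointness obstruction of Corollary~\ref{cor:nontrivial-product-if-det-and-disjoint}. Once the Claim and the completeness of each $J^{ab}$ are established, the case analysis on the number of active row-pairs closes cleanly, yielding the stated dichotomy (two common rows or two common columns). The transpose statement for $n\times 3$ matrices follows by replacing $X$ with $X^T$.
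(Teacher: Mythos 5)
Your proof is correct, and it rests on exactly the same three ingredients as the paper's argument: the column-multigraded restriction to three columns feeding into Lemma~\ref{lem:3by3} (via Remark~\ref{rmk:grading}), Corollary~\ref{cor:two-rows} in transposed form, and the disjointness obstruction of Corollary~\ref{cor:nontrivial-product-if-det-and-disjoint}. What differs is the organization. The paper anchors on a single minor $M_1$ with column pair $\{1,2\}$ and branches on whether $I$ contains a second minor with the same column pair but a different row pair: in each branch it produces a ``connecting'' generator via Corollary~\ref{cor:nontrivial-product-if-det-and-disjoint} and kills it with a three-column restriction, applying the transposed Corollary~\ref{cor:two-rows} only to rows $\{1,2\}$. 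You instead extract a reusable structural Claim --- minimal generators with distinct row pairs have equal or disjoint column pairs --- which is indeed an immediate consequence of the three-column restriction plus Lemma~\ref{lem:3by3}, since a $2\times\ell$ block $Y'$ cannot have $I_2(Y')$ containing a quadric supported on a different row pair, nor an $\ell\times 2$ block one supported on a different column pair. You then decompose $I=J^{12}+J^{13}+J^{23}$ into complete determinantal blocks $J^{ab}=I_2(Y_{ab})$ and count active row pairs; the case analysis closes correctly (one active pair gives (ii); two active pairs die either by a variable-disjoint splitting or by exhibiting exactly two of the three minors on a common column pair, contradicting Corollary~\ref{cor:two-rows}; three active pairs force all three column pairs to coincide, giving the $3\times 2$ block). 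Your route buys symmetry and an explicit global picture of $I$ before any contradiction is drawn, at the cost of heavier bookkeeping; the paper's route is shorter because its WLOG normalizations let two brief contradictions cover all configurations.
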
 
\begin{proof}
    As noted before, (ii) $\Longrightarrow$ (i) is true. Conversely, suppose that (i) holds. Recall that the Koszul complex $\mathcal K(x_{11},\ldots,x_{3n}; S/I)$ is graded with respect to the grading defined by $\deg(x_{ij})=\epsilon_j$. Thus, if $X^{j_1,j_2,j_3}$ is the submatrix of $X$ consisting of columns $j_1, j_2, j_3$, and if $I^{j_1,j_2,j_3}$ is the ideal of $S$ generated by the generating minors of $I$ coming from $X^{j_1,j_2,j_3}$, then by Lemma \ref{lem:3by3}, we see that $I^{j_1, j_2, j_3}=I_2(Y)$ for some $2\times \ell$ or $\ell \times 2$ submatrix $Y$ of $X^{j_1,j_2,j_3}$. 

    For the sake of contradiction, assume that (ii) does not hold. Then we must have $\mu(I)\geq 2$. Without loss of generality, let $M_1\coloneqq\begin{vmatrix}
        x_{11} & x_{12} \\ x_{21} & x_{22}
    \end{vmatrix} \in I$. We consider the following cases: 
    
   $\bullet$ \textit{Case 1:} $M_2\coloneqq\begin{vmatrix}
        x_{21} & x_{22} \\ x_{31} & x_{32}
    \end{vmatrix} \in I$.\\
    By our assumption and Corollary \ref{cor:nontrivial-product-if-det-and-disjoint}, $I$ must have a minor generator $M_3$ of the form $\begin{vmatrix}
        x_{i_1j} & x_{i_1k} \\ x_{i_2j} & x_{i_2k}
    \end{vmatrix}$, where $j \leq 2$ and $k >2$. But as mentioned above, taking $(j_1, j_2,j_3)=(1,2,k)$, we get a contradiction.

  $\bullet$  \textit{Case 2:} $M_2 \not\in I$. \\
    In view of Corollary \ref{cor:two-rows}, after permuting the columns of $X$ if necessary, we may let $\ell$ be the integer such that the ideal $I_{1,2}$ generated by generators of $I$ coming from the first two rows of $X$ equals $I_2(Y)$, where $Y= \begin{bmatrix} 
    x_{11} & \cdots & x_{1\ell} \\
    x_{21} & \cdots & x_{2\ell}
\end{bmatrix}$.

We claim that $I=I_2(Y)$. By our assumption and Corollary \ref{prop:tensor}, $I$ must have a minor generator $M_3\not\in I_2(Y)$ containing at least one, and at most two variables appearing in $Y$. If $M_3$ has one variable common with $Y$, then by permuting the columns of $X$ if necessary, we may assume that $M_3=\begin{vmatrix}
        x_{22} & x_{2k} \\ x_{32} & x_{3k}
    \end{vmatrix}$, for some $k >\ell$. But this gives a contradiction by taking $(j_1, j_2, j_3)=(1, 2, k)$. If $M_3$ has two variables common with $Y$, then again, after a permutation of columns if necessary, we may assume that $M_3=\begin{vmatrix}
        x_{22} & x_{23} \\ x_{32} & x_{33}
    \end{vmatrix}$. This again leads to a contradiction by choosing $(j_1,j_2, j_3)=(1,2,3)$.  
\end{proof}

We are now ready to prove our main theorem.

\textit{Proof of Theorem \ref{intro-main-thm}}.    

The implication (i) $\implies$ (iv) holds by Remark \ref{rmk:Golod-def}(c).

    From \cite[Theorem 4]{HRW99}, we know that if $I$ is componentwise linear and contains no linear forms, then $S/I$ is Golod. Since ideals with linear resolution are componentwise linear, (ii) $\implies$ (i) holds.

    When $I=I_2(Y)$ for some $2 \times \ell$ or $\ell \times2$ matrix $Y$, the Eagon--Northcott complex gives a linear minimal free resolution of $I$ over $S$. Thus, (iii) $\implies$ (ii) is true.

    To complete the proof, we now prove (iv) $\implies$ (iii). The idea of the proof is very much similar to that of Corollary \ref{cor:3byn}. Firstly, observe that if $\mu(I)\leq 1$, then there is nothing to show. So, assume $\mu(I)\geq 2$. Let $\ell$ be the largest integer such that there exits a $ 2 \times \ell$ or $\ell \times 2$ submatrix $Y$ of $X$, such that every $2 \times 2$ minor of $Y$ is a minimal generator of $I$. Without loss of generality, and if necessary, replacing $X$ with $X^T$, we may assume that 
    $$Y=\begin{bmatrix}
    x_{11} &\cdots & x_{1\ell} \\
    x_{21} &\cdots & x_{2\ell}
\end{bmatrix}.$$
We consider two cases. 

 $\bullet$ \textit{Case 1:}\ $\ell=2$.\\
By our choice of $\ell$, it follows that if $M$ is any $2 \times 2$ minor of $X$ which is a generator of $I$, then $M$ contains at most one variable from the variables appearing in $Y$. If there is no $M$ with a variable common with those in $Y$, then $I$ is of the form $I_2(Y)+J$, for some nonzero ideal $J$ having a generating set involving variables not appearing in $Y$. In this case, (iv) does not hold by Corollary \ref{cor:nontrivial-product-if-det-and-disjoint}. So, suppose that there is one common variable between $M$ and $Y$. Without loss of generality, let $M=\begin{vmatrix} x_{22} & x_{23} \\ x_{32} & x_{33} \end{vmatrix}$. Then applying Corollary \ref{cor:3byn} to the first three columns of $X$, we get that (iv) does not hold.

$\bullet$ \textit{Case 2:}\ $\ell \geq 3$.\\
By a similar analysis as in Case 1 above, given any $M$, it must intersect $Y$ in at most $2$ variables. If no $M$ intersects with $Y$, or if the intersection is a single variable, then as in Case 1, we see that (iv) does not hold. In case the intersection contains 2 variables, without loss of generality (or more precisely, permuting the columns, if necessary), we may assume that  $M= \begin{vmatrix} x_{21} & x_{22} \\ x_{31} & x_{32} \end{vmatrix}$. 
Again, applying Corollary \ref{cor:3byn} to the the first 3 columns of $X$, we see that (iv) does not hold. 

    This shows that $I$ must be equal to $I_2(Y)$, completing the proof. \hfill{} $\square$

\begin{rmk}\label{rmk:general_mn} {\rm
 In general, if $I$ is a $t$-subdeterminantal ideal of an $m \times n$ matrix $X$ of indeterminates, where $t, m, n\geq 3$, then the Golodness of $S/I$ does not imply that $I$ has a linear resolution over $S$. For instance, as the   \texttt{Macaulay2}~\cite{M2} computations below show, the ring $\mathbb{k} [X]/I$ is Golod when $I$ generated by any three of the four $3 \times 3$ minors of a $3 \times 4$ matrix $X$ of indeterminates, but in this case, $I$ does not have a linear resolution. The same example also tells us that for $t \geq 3$, the Golodness of $S/I$ does not force $I$ to be of the form $I_t(Y)$ for some $t \times \ell$ or $\ell \times t$ submatrix $Y$ of $X$.
}\end{rmk}    

\vspace{12pt}

\begin{lstlisting}[
    basicstyle=\ttfamily\scriptsize,
    numbers=none,
    frame=single,
    framerule=0.5pt,
    xleftmargin=5pt,
    xrightmargin=5pt,
    aboveskip=2pt,
    belowskip=2pt,
    lineskip=-1pt,
    breaklines=true,
    showstringspaces=false,    mathescape=true
]
i1 : loadPackage "DGAlgebras";

i2 : m = 3;

i3 : n = 4;

i4 : S = QQ[x_(1,1)..x_(m,n)];

i5 : X = transpose genericMatrix(S, n, m);

o5 : Matrix ${S^3}$ <--- ${S^3}$

i6 : mins = flatten entries gens minors(3, X);

i7 : I = ideal(mins#0, mins#1, mins#2);

o7 : Ideal of S

i8 : betti res I

o8 =

            0 1 2 3
total:      1 3 3 1
    0:      1 . . .
    1:      . 3 . .
    2:      . . 3 1

o8 : BettiTally

i9 : R = S/I;

i10 : isGolod R

o10 = true
\end{lstlisting}

\vspace{12pt}    
Theorem \ref{intro-main-thm} and Remark \ref{rmk:general_mn} above suggest us to ask the following.     

    \begin{question}
       Let $t,m,n \geq 3$, $X=(x_{ij})_{m \times n}$ be a matrix of indeterminates, and $S=\mathbb{k}[X]$. Suppose that $I$ is a $t$-subdeterminantal ideal of $X$. Is it true that $S/I$ is Golod if and only if the product on the Koszul homology of $S/I$ is trivial?
    \end{question}

    \begin{question}
         Let $t,m,n \geq 3$, $X=(x_{ij})_{m \times n}$ be a matrix of indeterminates, and $S=\mathbb{k}[X]$. Let $I$ be a  $t$-subdeterminantal ideals of $X$. When is $S/I$ Golod?
    \end{question}
(Generalized) binomial edge ideals form a special case of (generalized) determinantal facet ideals (see \cite{AV22, EHHT13}). 
In recent years, the study of generalized determinantal facet ideals, much like that of generalized binomial edge ideals, has attracted significant interest. 
The above question can therefore be reformulated as asking which generalized determinantal facet ideals associated with pure simplicial complexes of dimension $(t-1)$ are Golod.

In the literature, determinantal ideals arising from other classes of matrices of indeterminates, such as symmetric or Hankel matrices, have also been studied. 
One may ask similar questions about the Golod property of the rings defined by these ideals. 
Another natural direction is to study analogous questions for permanental ideals.
In general, permanental ideals exhibit a completely different and non-uniform behavior when compared with determinantal ideals. 
It would be interesting to know what happens to the Golod property for permanental ideals.


\end{document}